\tikzstyle{edge} = [fill,opacity=.5,fill opacity=.5,line cap=round, line join=round, line width=50pt]
\theoremstyle{plain}
\theoremstyle{definition}
\newtheorem{theorem}{Theorem}[section]
\newtheorem{definition}[theorem]{Definition}
\newtheorem{question}[theorem]{Question}
\newtheorem{example}[theorem]{Example}
\newtheorem{proposition}[theorem]{Proposition}
\newtheorem{corollary}[theorem]{Corollary}
\DeclareMathAlphabet{\mathpzc}{OT1}{pzc}{m}{it}
\newcommand{\s}{\sigma}
\newcommand{\supp}{\textnormal{\textsf{supp}}}
\newcommand{\mf}[1]{\mbox{$\mathfrak #1$}}
\newcommand{\rw}[1]{\left[{#1}\right]}
\newcommand{\newperm}[2]{{#1}^{[{#2}]}}
\begin{document}

\title[Boolean intersection ideals]{Boolean intersection ideals of permutations\\
in the Bruhat order}

\date{}

\author{Bridget Eileen Tenner}
\address{Department of Mathematical Sciences, DePaul University, Chicago, IL, USA}
\email{bridget@math.depaul.edu}
\thanks{Research partially supported by NSF Grant DMS-2054436 and Simons Foundation Collaboration Grant for Mathematicians 277603.}

\keywords{}%

\makeatletter
\@namedef{subjclassname@2020}{%
  \textup{2020} Mathematics Subject Classification}
\makeatother
\subjclass[2020]{Primary: 20F55; 
Secondary: 06A07, 
05E16
}

\begin{abstract}%
Motivated by recent work with Mazorchuk, we characterize the conditions under which the intersection of two principal order ideals in the Bruhat order is boolean. That characterization is presented in three versions: in terms of reduced words, in terms of permutation patterns, and in terms of permutation support. The equivalence of these properties follows from an analysis of what it means to have a specific letter repeated in a permutation's reduced words; namely, that a specific $321$-pattern appears.
\end{abstract}

\maketitle

In recent work with Mazorchuk, we studied intersections of a boolean principal order ideal with an arbitrary principal order ideal in the Bruhat order of the symmetric group \cite{mazorchuk tenner}. That boolean requirement was enough to guarantee that the grade of the simple module indexed by a boolean element is given by Lusztig's $\mathbf{a}$-function (see \cite{lusztig}). In the present work, we study a related question; namely, given two arbitrary principal order ideals in the Bruhat order, when is their intersection boolean? Certainly if either permutation itself is boolean then their intersection will also be boolean, but that is merely a special case. We answer the general question by first characterizing a more general property relating reduced words and permutation patterns, which is related, in some ways, to previous work \cite{tenner rdpp, tenner rwm}.

We begin this note with definitions and a presentation of the problem. In Section~\ref{sec:previous}, we use previous work to give our first characterization of boolean intersection ideals in terms of an ``interlacing'' property of reduced words (Theorem~\ref{thm:not boolean means braid}). Section~\ref{sec:interlaced} explores the interlacing property more deeply in its own right, and shows that interlacing in $w$ is equivalent to a particular $321$-pattern appearing in a permutation $\newperm{w}{k}$ (Theorem~\ref{thm:interlaced 321}). We conclude with Section~\ref{sec:answer}, giving a pattern characterization in Corollary~\ref{cor:boolean intersection and centering} for permutations $v$ and $w$ that is equivalent to $B(v) \cap B(w)$ being boolean, as well as a characterization in terms of support (Corollary~\ref{cor:boolean intersection and support}). In Sections~\ref{sec:previous} and~\ref{sec:answer}, we also address features that one might hope to have in these boolean intersections, but which do not always hold.

\section{Definitions and notation}\label{sec:definitions}

This work is concerned with permutations in the symmetric group $\mf{S}_n$, under the Bruhat order. For any $w \in \mf{S}_n$, we will write $B(w)$ for the principal order ideal of $w$. Our interest is in intersections of the form
$$B(v) \cap B(w),$$
which have not previously received much attention. By the subword property (see \cite{bjorner brenti}), this intersection is an order ideal.

The Coxeter group $\mf{S}_n$ is generated by the adjacent transpositions $\{\s_i : i \in [1,n-1]\}$. As in \cite{tenner rdpp, tenner rwm}, we will be interested in both the one-line representation of a permutation $w \in \mf{S}_n$ and in the reduced words $R(w)$ for $w$. To indicate that a string $s$ of values represents a reduced word, and not a permutation in one-line notation, we will write $\rw{s}$. Permutations are composed from right to left, and so
$$R(2431) = \{\rw{1232}\!, \rw{1323}\!, \rw{3123}\},$$
and we can write $2431 = \rw{1232} = \rw{1323} = \rw{3123}$. The example of $R(2431)$ includes an important feature that we highlight with a definition.

\begin{definition}\label{defn:interlace}
Consider a permutation $w \in \mf{S}_n$ whose reduced words contain both $k$ and $k+1$ for some $k \in [1,n-2]$. If $w$ has a reduced word with one (or both) of the forms
\begin{equation}\label{eqn:interlace defn}
\rw{\cdots k \cdots (k+1) \cdots k \cdots} \hspace{.25in} \text{or} \hspace{.25in} \rw{\cdots (k+1) \cdots k \cdots (k+1) \cdots},
\end{equation}
then $k$ and $k+1$ are \emph{interlaced} in $w$.
\end{definition}

Thus $2431$ interlaces $2$ and $3$, and it does not interlace $1$ and $2$.

Note that the hypothesis on ``a reduced word'' in Definition~\ref{defn:interlace} can equivalently be replaced by the same hypothesis on \emph{all} reduced words of $w$. This is because any two reduced words for $w$ are related to each other by a sequence of commutation and braid moves.

As studied previously, a permutation is \emph{boolean} if its principal order ideal in the Bruhat order is isomorphic to a boolean algebra \cite{hultman vorwerk, mazorchuk tenner, ragnarsson tenner homotopy, ragnarsson tenner homology, tenner patt-bru}. Here we extend that definition to order ideals.

\begin{definition}\label{defn:boolean order ideal}
An order ideal in the Bruhat order is \emph{boolean} if all of its elements are boolean elements.
\end{definition}

The principal order ideal of any boolean element is certainly boolean. Perhaps more interestingly, the $5$-element order ideal consisting of the permutations $B(2143) \cup B(1324) = \{1234,2134,1324, 1243,2143\} \subset \mf{S}_4$ is also boolean. This order ideal is depicted in Figure~\ref{fig:sample boolean ideal}.

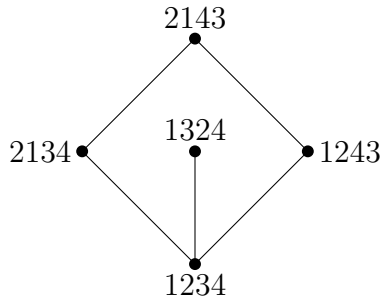
\begin{figure}[htbp]
\begin{tikzpicture}[scale=.75]
\foreach \x in {(0,0), (0,2), (2,2), (-2,2), (0,4)} {\fill \x circle (3pt);}
\draw (0,0) -- (-2,2) -- (0,4) -- (2,2) -- (0,0) -- (0,2);
\draw (0,0) node[below] {$1234$};
\draw (-2,2) node[left] {$2134$};
\draw (2,2) node[right] {$1243$};
\draw (0,4) node[above] {$2143$};
\draw (0,2) node[above] {$1324$};
\end{tikzpicture}
\caption{The boolean order ideal $B(2143) \cup B(1324) \subset \mf{S}_4$.}\label{fig:sample boolean ideal}
\end{figure}

The purpose of this note is to answer the following question.

\begin{question}\label{ques:boolean order ideal}
Under what circumstances is $B(v) \cap B(w)$ a boolean order ideal?
\end{question}

\section{Characterization in terms of interlacing}\label{sec:previous}

Two previous results will be key to answering Question~\ref{ques:boolean order ideal}. The first of these is a characterization of boolean permutations. Although we state this in terms of permutations and the symmetric group, a version of this characterization exists for any Coxeter group.

\begin{proposition}[{\!\!\cite{tenner patt-bru}}]\label{prop:boolean characterization}
A permutation is boolean if and only if its reduced words contain no repeated letters. This is equivalent to the permutation avoiding the patterns $321$ and $3412$.
\end{proposition}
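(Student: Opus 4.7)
The statement contains two equivalences: ``boolean $\iff$ no repeated letters in reduced words,'' and ``no repeated letters $\iff$ $w$ avoids $321$ and $3412$.'' I would prove these in order.

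\textbf{Boolean iff no repeats.} The principal order ideal $B(w)$ is graded by Coxeter length, so the top element has rank $\ell(w)$ and the atoms are the adjacent transpositions $\s_i \le w$. By the subword property, these atoms are indexed exactly by $\supp(w)$, the common set of letters of the reduced words of $w$. A boolean lattice of rank $\ell(w)$ has precisely $\ell(w)$ atoms, so if $B(w)$ is boolean then $|\supp(w)| = \ell(w)$, meaning no reduced word of $w$ can have a repeated letter. For the converse, fix a reduced word $\rw{i_1\cdots i_k}$ with $k = \ell(w)$ distinct letters. Every subword is automatically reduced (there is no way to apply an $\s_i^2 = e$ cancellation), so by the subword property the map $S \mapsto \prod_{j \in S}\s_{i_j}$ from $2^{[1,k]}$ surjects onto $B(w)$. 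A short induction on $k$, tracking which $\s_i$ contributes which inversion, shows this map is injective and order-preserving, so $B(w)\cong 2^{\supp(w)}$.

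\textbf{Repeats iff contains $321$ or $3412$.} The forward implication is immediate from the subword property: since $321 = \rw{121}$ and $3412 = \rw{2312}$ each have a repeated letter in every one of their reduced words (length exceeds the size of the support), a permutation containing either pattern inherits a repeated letter in some reduced word.

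For the converse, I would argue contrapositively: if $w$ avoids both $321$ and $3412$, then no reduced word of $w$ has a repeat. The plan is to examine what it means for the letter $k$ to be repeated in some reduced word $\rw{\cdots k \cdots k \cdots}$: geometrically, the two applications of $\s_k$ correspond to two distinct ``crossing events'' at height $k{:}k{+}1$ in the wiring diagram, and the second crossing requires a third wire to have been pushed across the first crossing in between. A careful case analysis of the relative positions of the three relevant wires produces either a descending triple (the pattern $321$) or the configuration $3412$, contradicting the avoidance hypothesis.

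The main obstacle is this wiring-diagram/pattern analysis for the last step: the counting and atom argument in the first equivalence is routine once the subword property is in hand, but extracting a specific $321$ or $3412$ pattern from an arbitrary repeat requires a careful geometric case split, which is the combinatorial heart of the proposition.
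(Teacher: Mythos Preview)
The paper does not prove this proposition; it is quoted from \cite{tenner patt-bru} and used as a tool. So there is no in-paper proof to compare against, and your outline has to stand on its own.

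Your argument for ``boolean $\iff$ no repeated letters'' is fine; injectivity of $S \mapsto \prod_{j \in S}\s_{i_j}$ is even simpler than you suggest, since distinct subsets $S$ produce permutations with distinct supports.

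The genuine gap is in the step ``$w$ contains $321$ or $3412 \Rightarrow$ some reduced word of $w$ has a repeat.'' You call this immediate from the subword property, but the subword property compares elements in \emph{Bruhat order}, not pattern containment: knowing that $w$ contains a $321$-pattern does not, by itself, place any particular $\rw{k(k+1)k}$ below $w$. That implication is true, but it is essentially the Billey--Jockusch--Stanley theorem (a $321$-pattern forces a braid factor in some reduced word), which is already a nontrivial result; the $3412$ case needs its own argument showing $\ell(w) > |\supp(w)|$. So the direction you labelled ``immediate'' is not, whereas your wiring-diagram plan for the other direction (a repeat forces a $321$- or $3412$-pattern) is the right idea and is where the real work lies. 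In the end both directions rest on the same kind of position/value case analysis carried out in \cite{tenner patt-bru}.
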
 

Fix permutations $v, w \in \mf{S}_n$. Determining whether $B(v) \cap B(w)$ is boolean amounts to checking each $u \in B(v) \cap B(w)$ against the equivalent conditions of Proposition~\ref{prop:boolean characterization}: the intersection $B(v) \cap B(w)$ fails to be boolean if and only if it contains some $u$ whose reduced words contain repeated letters. That is, the question amounts to determining whether such a $u$ has a reduced word containing two copies of some $k \in [1,n-1]$. By the subword property, then, we can say the following.

\begin{theorem}\label{thm:not boolean means braid}
The intersection $B(v) \cap B(w)$ is not boolean if and only if $\rw{k(k+1)k} \in B(v) \cap B(w)$ for some $k \in [1,n-2]$. Equivalently, the intersection ideal $B(v) \cap B(w)$ fails to be boolean if and only if there exists $k \in [1,n-2]$ for which $k$ and $k+1$ are interlaced in both $v$ and $w$.
\end{theorem}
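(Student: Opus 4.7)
The plan is to reduce via Proposition~\ref{prop:boolean characterization} and then apply the subword property. By Definition~\ref{defn:boolean order ideal}, the order ideal $B(v) \cap B(w)$ fails to be boolean exactly when it contains some non-boolean element $u$; by Proposition~\ref{prop:boolean characterization}, this is equivalent to saying that some reduced word of some $u \in B(v) \cap B(w)$ contains a repeated letter. I would then show this condition holds iff $\rw{k(k+1)k} \in B(v) \cap B(w)$ for some $k \in [1,n-2]$.

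The $(\Leftarrow)$ direction is immediate, since $\rw{k(k+1)k}$ itself has a repeated letter and so is non-boolean. For $(\Rightarrow)$, I would take a reduced word of $u$ containing a repeated letter and, among all repeated letters, pick two occurrences of some common value $k$ that are closest together in the word. The key observation is that at least one letter strictly between them must lie in $\{k-1,k+1\}$: otherwise every intermediate letter commutes with $\s_k$, so a sequence of commutation moves brings the two $k$'s adjacent, contradicting reducedness. Extracting the three-letter subword $k,\ k \pm 1,\ k$ yields a reduced expression for the permutation $\rw{k(k+1)k}$ (using the braid identity $\rw{k(k-1)k} = \rw{(k-1)k(k-1)}$ to reindex if the middle letter is $k-1$, which requires $k \geq 2$ and produces the valid index $k-1 \in [1,n-2]$). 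By the subword property, this permutation lies in $B(u) \subseteq B(v) \cap B(w)$.

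For the equivalence with interlacing, I would unwind Definition~\ref{defn:interlace}: the interlacing of $k$ and $k+1$ in a permutation $x$ means some reduced word of $x$ has a subword matching one of the forms in~(\ref{eqn:interlace defn}). By the subword property this is equivalent to $\rw{k(k+1)k} \leq x$, noting that the braid relation identifies $\rw{k(k+1)k}$ with $\rw{(k+1)k(k+1)}$ as the same permutation, so both witnesses produce the same Bruhat-minimal element. Applying this characterization simultaneously to $v$ and $w$ delivers the claimed reformulation.

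The main obstacle is the combinatorial step that between two minimally separated occurrences of $k$ in a reduced word, some letter in $\{k-1,k+1\}$ must appear. This is standard, resting only on the commutation relations for non-adjacent generators, but it deserves explicit justification; once it is in hand, the remainder of the theorem follows mechanically from Proposition~\ref{prop:boolean characterization} and the subword property.
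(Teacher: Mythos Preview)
Your proposal is correct and follows essentially the same approach as the paper: the paper's argument (given inline just before the theorem) reduces via Proposition~\ref{prop:boolean characterization} to finding a repeated letter in some $u \in B(v)\cap B(w)$ and then invokes the subword property, exactly as you do. You are simply more explicit than the paper in justifying why a $k\pm 1$ must appear between the two closest occurrences of $k$, and in unwinding the interlacing definition via the braid relation $\rw{k(k+1)k}=\rw{(k+1)k(k+1)}$; these details are left implicit in the paper's terse treatment.
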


While this does, in a sense, answer Question~\ref{ques:boolean order ideal}, we can clarify that answer further. In particular, previous work has shown connections between reduced words and permutation patterns, and we can make use of those relationships here. To do so, we will call upon the following result.

\begin{proposition}[{\!\!\cite{mazorchuk tenner}}]\label{prop:interlaced characterization}
Fix a permutation $w \in \mf{S}_n$. Then $k$ and $k+1$ are interlaced in $w$ if and only if there exists $i \in [1,k]$ and $j \in [k+2,n]$ such that $w(i) > k+1$ and $w(j) < k+1$.
\end{proposition}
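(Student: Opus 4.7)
My plan begins by recasting the interlacing condition in a cleaner algebraic form. Using the braid relation $\sigma_k \sigma_{k+1} \sigma_k = \sigma_{k+1} \sigma_k \sigma_{k+1}$ together with the subword property (and the remark already made after Definition~\ref{defn:interlace}), $k$ and $k+1$ are interlaced in $w$ if and only if $t \leq w$ in the Bruhat order, where $t := \sigma_k \sigma_{k+1} \sigma_k$ is the permutation agreeing with the identity outside positions $k, k+1, k+2$ and equal to $(k+2,\, k+1,\, k)$ on those three positions. Thus the proposition reduces to characterizing $t \leq w$ in terms of the one-line values of $w$.

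Next, I would apply the rank (Ehresmann) criterion for the Bruhat order: $t \leq w$ if and only if $|\{a \leq p : t(a) \geq q\}| \leq |\{a \leq p : w(a) \geq q\}|$ for every $p, q$. Since $t$ coincides with the identity off of $\{k, k+1, k+2\}$, and since $t$ merely permutes these three indices among themselves, the inequalities with $p < k$, $p \geq k+2$, or $q \leq k$ all collapse to the trivial identity comparison. A short case-check shows the only genuinely nontrivial inequalities are at $(p, q) = (k, k+2)$ and $(p, q) = (k+1, k+1)$, the pair $(k, k+1)$ being a weakening of $(k, k+2)$.

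For the $(\Rightarrow)$ direction, applying the $(k, k+2)$ inequality yields some $i \leq k$ with $w(i) \geq k+2$, that is $w(i) > k+1$. Applying the $(k+1, k+1)$ inequality yields at least two values $\geq k+1$ among $w(1), \ldots, w(k+1)$; since exactly $n-k$ values are $\geq k+1$ globally, at most $n-k-2$ can lie in positions $\geq k+2$, so some $j \geq k+2$ must satisfy $w(j) \leq k < k+1$.

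For the $(\Leftarrow)$ direction, the first hypothesis is literally the $(k, k+2)$ inequality. For $(k+1, k+1)$: the second hypothesis forces at most $n-k-2$ of the $n-k$ values $\geq k+1$ into positions $\geq k+2$, so at least two such values sit in the first $k+1$ positions, which is the required inequality. Both nontrivial inequalities hold, so $t \leq w$. I expect the main obstacle to be the case analysis pinning down which rank inequalities are genuinely nontrivial; once that short list is identified, each direction is a one-line counting argument.
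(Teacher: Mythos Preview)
Your argument is correct. The reformulation of interlacing as $t := \sigma_k\sigma_{k+1}\sigma_k \le w$ is exactly the content of the subword property together with the braid relation, and your reduction of the Ehresmann tableau criterion to the two inequalities at $(p,q)=(k,k+2)$ and $(p,q)=(k+1,k+1)$ is accurate: for all other $(p,q)$ the $t$-count equals the identity count, and those inequalities hold for every $w$ since the identity is the Bruhat minimum. The counting in each direction is sound; in particular, the pigeonhole step translating ``at least two of $w(1),\dots,w(k+1)$ are $\ge k+1$'' into ``some $j\ge k+2$ has $w(j)\le k$'' (and conversely) is correct because there are exactly $n-k$ values $\ge k+1$ and exactly $n-k-1$ positions $\ge k+2$.

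Note, however, that the paper does not itself prove Proposition~\ref{prop:interlaced characterization}; it is quoted from \cite{mazorchuk tenner}. So there is no ``paper's own proof'' to compare against here. Your approach via the Ehresmann rank criterion is a perfectly standard and efficient route to this kind of statement; an alternative, which is closer in spirit to how such facts are often argued in that reference, is to reason directly from the wiring-diagram interpretation of reduced words (a $k$ sandwiched between two copies of $k\pm 1$ forces a strand from the first $k$ positions to cross above level $k+1$ and a strand from the last $n-k-1$ positions to cross below level $k+1$, and conversely). Both arguments are short; yours has the advantage of being entirely mechanical once the two relevant $(p,q)$ pairs are isolated.
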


We find it illuminating here to also highlight a red herring. In \cite{tenner rep-range}, we introduced the language of straddling patterns to describe particular occurrences of $321$- and $3412$-patterns, and their relationship to the number of times each letter $k \in [1,n-1]$ can appear in reduced words of a permutation $w \in \mf{S}_n$. A particular result from that work, namely \cite[Theorem 3.3]{tenner rep-range}, looks at first glance like it might be helpful for characterizing permutations for which $B(v) \cap B(w)$ is boolean. However, a subtlety of $3412$-pattern containment dashes those hopes. For example, there is no $k$ for which the permutations $34125, 14523 \in \mf{S}_5$ both straddle $k$ in both position and value. However, $B(34125) \cap B(14523)$ is not boolean because it contains $B(\rw{232})$. The hiccup here is that $R(34125) = \{\rw{2132},\rw{2312}\}$ and $R(14523) = \{\rw{3243},\rw{3423}\}$, and none of these contains both $232$ and $323$ as subwords.

\section{The meaning of interlaced letters}\label{sec:interlaced}

Our goal now is to understand what it means for letters to be interlaced in a permutation. Because interlacing is defined by the configurations in \eqref{eqn:interlace defn} and because $R(321) = \{\rw{121},\rw{212}\}$, it is tempting to expect a relationship between interlacing and $321$-patterns. That intuition is correct, but the relationship is not straightforward. For an indication of why not, recall the interlacing in the $321$-avoiding permutation $3412$ discussed above.

As we will show, interlaced letters in $w$ will imply the presence of a $321$-pattern in a permutation $\newperm{w}{k}$. This $\newperm{w}{k}$ will be related to $w$, but will fix $k+1$ and will change the rest of $w$ as little as possible while being careful about which inversions are created by the changes that are necessary. Moreover, we will show that interlacing in $w$ is equivalent to a specific and describable $321$-pattern in the permutation $\newperm{w}{k}$. 

\begin{definition}\label{defn:w'}
Fix a permutation $w \in \mf{S}_n$ and a value $k \in [1,n-2]$. Set $m := w^{-1}(k+1)$ and define $\newperm{w}{k}$ as follows.
\begin{itemize}
\item If $m = k+1$, then $\newperm{w}{k} := w$.
\item If $m > k+1$ and $w(k+1) > k+1$, or if $m < k+1$ and $w(k+1) < k+1$, then let $\newperm{w}{k} \in \mf{S}_n$ be the permutation defined by
$$\newperm{w}{k}(i) := \begin{cases}
w(i) & \text{ for } i \not\in \{m,k+1\},\\
w(m) = k+1 & \text{ for } i = k+1, \text{ and}\\
w(k+1) & \text{ for } i = m.
\end{cases}$$
\item If $m > k+1$ and $w(k+1) < k+1$, then there is necessarily some $t < k+1$ with $w(t) > k+1$, and we (arbitrarily) pick the maximal such $t$. If $m < k+1$ and $w(k+1) > k+1$, then there is necessarily some $t > k+1$ with $w(t) < k+1$, and we (arbitrarily) pick the minimal such $t$. In either case, let $\newperm{w}{k} \in \mf{S}_n$ be the permutation defined by
$$\newperm{w}{k}(i) := \begin{cases}
w(i) & \text{ for } i \not\in \{m,k+1,t\},\\
w(m) = k+1 & \text{ for } i = k+1,\\
w(k+1) & \text{ for } i = t, \text{ and}\\
w(t) & \text{ for } i = m.
\end{cases}$$
\end{itemize}
\end{definition}

Note what changes between $w$ and $\newperm{w}{k}$ in each case of Definition~\ref{defn:w'}. In the first case, nothing changes. In the second, the values $k+1$ and $w(k+1)$ create an inversion in $w$ and are then swapped in the one-line notation to form $\newperm{w}{k}$. The third scenario is when the values $k+1$ and $w(k+1)$ do not form an inversion, and so we find a value $w(t)$ that forms an inversion with both of them. In the first subcase, the permutation $w$ has a $312$-pattern in positions $t < k+1 < m$, and the values in those positions get permuted to form a $123$-pattern in $\newperm{w}{k}$. In the second subcase, the permutation $w$ has a $231$-pattern in positions $m < k+1 < t$, and the values in those positions get permuted to form a $123$-pattern in $\newperm{w}{k}$. In all situations, $k+1$ is a fixed point of the permutation $\newperm{w}{k}$.

This permutation $\newperm{w}{k}$ is what will contain the designated $321$-pattern when $k$ and $k+1$ are interlaced in $w$. The particularity of that pattern is defined by how it occurs in the permutation.

\begin{definition}\label{defn:centered 321}
Let $w$ be a permutation fixing a value $h$. If $w$ contains a $321$-pattern with middle value equal to (and appearing in position) $h$, then we will say that $w$ has a $321$-pattern \emph{centered at $h$}.
\end{definition}

Having defined $\newperm{w}{k}$ and centering, we can now describe interlaced letters in terms of $321$-patterns.

\begin{theorem}\label{thm:interlaced 321}
A permutation $w$ interlaces $k$ and $k+1$ if and only if the permutation $\newperm{w}{k}$ contains a $321$-pattern centered at $k+1$.
\end{theorem}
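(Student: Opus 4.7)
The plan is to reduce the theorem to a clean interlacing equivalence and then verify that equivalence case by case along Definition~\ref{defn:w'}. First I would observe that $\newperm{w}{k}(k+1) = k+1$ in every case of the construction, so a $321$-pattern in $\newperm{w}{k}$ centered at $k+1$ is exactly the existence of positions $a \leq k$ and $b \geq k+2$ with $\newperm{w}{k}(a) > k+1 > \newperm{w}{k}(b)$. By Proposition~\ref{prop:interlaced characterization} applied to $\newperm{w}{k}$, this is the same as saying that $\newperm{w}{k}$ interlaces $k$ and $k+1$. So it suffices to prove that $w$ interlaces $k$ and $k+1$ if and only if $\newperm{w}{k}$ does. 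I would carry this out by tracking, for each $u \in \{w,\newperm{w}{k}\}$, the sets
\[
\mathcal{A}(u) := \{i \leq k : u(i) > k+1\} \qquad \text{and} \qquad \mathcal{B}(u) := \{j \geq k+2 : u(j) < k+1\},
\]
and verifying that $\mathcal{A}(w),\mathcal{B}(w)$ are both nonempty exactly when $\mathcal{A}(\newperm{w}{k}),\mathcal{B}(\newperm{w}{k})$ both are; by Proposition~\ref{prop:interlaced characterization} that gives the desired interlacing equivalence.

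The first case of Definition~\ref{defn:w'} has $\newperm{w}{k} = w$ and needs no argument. In the second case, both swapped positions $k+1$ and $m$ lie to the right of $k$, so $\mathcal{A}(w) = \mathcal{A}(\newperm{w}{k})$ automatically; and at the one affected position $m \geq k+2$, the old value $k+1$ and the new value $w(k+1) > k+1$ are each not less than $k+1$, so $\mathcal{B}(w) = \mathcal{B}(\newperm{w}{k})$ as well. The third case is handled symmetrically (with the roles of $\mathcal{A}$ and $\mathcal{B}$ reversed).

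The main work lies in case~4a (case~4b being entirely symmetric). Here $\newperm{w}{k}$ is obtained from $w$ by a $3$-cycle on positions $t \leq k$, $k+1$, and $m \geq k+2$, with $\newperm{w}{k}(t) = w(k+1) < k+1$, $\newperm{w}{k}(k+1) = k+1$, and $\newperm{w}{k}(m) = w(t) > k+1$. Direct inspection gives $\mathcal{B}(w) = \mathcal{B}(\newperm{w}{k})$, since position $m$ holds value $k+1$ in $w$ and value $w(t) > k+1$ in $\newperm{w}{k}$ (neither contributing to $\mathcal{B}$); however, one has only $\mathcal{A}(\newperm{w}{k}) = \mathcal{A}(w) \setminus \{t\}$. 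The crucial step is then the counting identity
\[
(k - |\mathcal{A}(w)|) + 1 + |\mathcal{B}(w)| = k,
\]
obtained by counting the values in $\{1,\ldots,k\}$ across all positions: positions $\leq k$ contribute $k - |\mathcal{A}(w)|$ of them (none can hold $k+1$, since $m > k+1$), position $k+1$ contributes $1$ (since $w(k+1) < k+1$), and positions $\geq k+2$ contribute $|\mathcal{B}(w)|$. This forces $|\mathcal{A}(w)| = |\mathcal{B}(w)| + 1$, so interlacing in $w$ makes $|\mathcal{A}(w)| \geq 2$, ensuring that $\mathcal{A}(\newperm{w}{k}) = \mathcal{A}(w) \setminus \{t\}$ remains nonempty and that $\newperm{w}{k}$ interlaces. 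The reverse implication is immediate from $\mathcal{A}(\newperm{w}{k}) \subseteq \mathcal{A}(w)$ and $\mathcal{B}(\newperm{w}{k}) = \mathcal{B}(w)$. The main obstacle of the proof is precisely this counting step: without it one would worry that $t$ might be the only position $\leq k$ carrying a value above $k+1$, in which case the swap defining $\newperm{w}{k}$ would destroy all of $\mathcal{A}$.
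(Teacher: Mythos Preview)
Your argument is correct and follows the same overall shape as the paper's: a case-by-case verification along Definition~\ref{defn:w'}, translating the centered $321$-pattern in $\newperm{w}{k}$ into the inequalities of Proposition~\ref{prop:interlaced characterization} for $w$. The paper proceeds a little differently: rather than first reformulating the centered $321$-pattern as ``$\newperm{w}{k}$ interlaces $k$ and $k+1$,'' it observes directly that, since the positions $\{k+1,m,t\}$ carry no inversions in $\newperm{w}{k}$, any centered $321$-pattern in $\newperm{w}{k}$ must use positions $i,j$ at which $\newperm{w}{k}$ and $w$ agree --- and those same $i,j$ then witness interlacing in $w$. Where your version adds something is in the converse direction (interlacing in $w$ implies a centered $321$ in $\newperm{w}{k}$): your counting identity $|\mathcal{A}(w)| = |\mathcal{B}(w)| + 1$ in the third bullet of the definition ensures a witness $i \in \mathcal{A}(w)\setminus\{t\}$, a point the paper passes over without comment. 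One cosmetic remark: your labels ``case~2, case~3, case~4a, case~4b'' split the second and third bullets of Definition~\ref{defn:w'} into their two subcases each; a reader trying to match your numbering against the definition's three bullets may be briefly confused.
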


\begin{proof}
By Proposition~\ref{prop:interlaced characterization}, we have that $w$ interlaces $k$ and $k+1$ if and only if there exist $i$ and $j$ such that
\begin{equation}\label{eqn:inequalities for interlacing}
i < k+1 < j \hspace{.5in} \text{and} \hspace{.5in} w(i) > k+1 > w(j).
\end{equation}
These inequalities bear a notable resemblance to the definition of a $321$-pattern, but the middle term in the latter set is not quite what one would need unless $w$ were to fix $k+1$. 

As described in Definition~\ref{defn:w'}, the permutation $\newperm{w}{k}$ is constructed from $w$ by, among other things, moving the value $k+1$ into position $k+1$.

If $w(k+1) = k+1$, then $w = \newperm{w}{k}$ and the inequalities in \eqref{eqn:inequalities for interlacing} would describe the desired occurrence of $321$ in $\newperm{w}{k}$.

Now assume that $w(k+1) \neq k+1$. Thus $w \neq \newperm{w}{k}$, and we recall the definitions of $m$ and $t$ from Definition~\ref{defn:w'}. In particular, there are no inversions among the positions $\{k+1,m,t\}$ (or just $\{k+1,m\}$ in the second category of the definition) in $\newperm{w}{k}$. Therefore there is a $321$-pattern in $\newperm{w}{k}$ in positions $i < k+1 < j$ and having values $\newperm{w}{k}(i) > \newperm{w}{k}(k+1) = k+1 > \newperm{w}{k}(j)$ if and only if neither $i$ nor $j$ is equal to $m$ or $t$ (if $t$ is defined). That is, there is such a $321$-pattern if and only if $\newperm{w}{k}(i) = w(i)$ and $\newperm{w}{k}(j) = w(j)$. Thus $i$ and $j$ satisfy the inequalities of \eqref{eqn:inequalities for interlacing}, and so there is such a $321$-pattern in $\newperm{w}{k}$ if and only if $k$ and $k+1$ are interlaced in $w$.
\end{proof}

It is helpful to demonstrate Definition~\ref{defn:w'} and Theorem~\ref{thm:interlaced 321} with examples.

\begin{example}
Consider $w = 462135 \in \mf{S}_6$.
\begin{enumerate}[label=(\alph*)]
\item If $k = 1$, then we are in the second category of Definition~\ref{defn:w'}, meaning that $\newperm{w}{1} = 426135$. There is a $321$-pattern in positions $1 < 2 < 4$ of this permutation, centered at $2 = k + 1$. This confirms the fact that $w$ interlaces $1$ and $2$, as we see, for example, in the reduced word $\rw{53412312} \in R(w)$.
\item For $k = 2$, we are in the third category of the definition and $t = 2$. Then $\newperm{w}{2} = 423165$, which has the desired $321$-pattern in positions $1 < 3 < 4$, centered at $3 = k+1$. This confirms the fact that $w$ interlaces $2$ and $3$, as we see in $\rw{53412312} \in R(w)$.
\item With $k = 4$, we are again in the third category with $t = 2$. Then $\newperm{w}{4} = 432156$, which has no $321$-pattern centered at $5 = 4+1$. This confirms the fact that $w$ does not interlace $4$ and $5$, as we see from its reduced words $\rw{53412312}$, and so on.
\end{enumerate}
\end{example}

\section{Characterizations in terms of patterns and support}\label{sec:answer}

Theorem~\ref{thm:interlaced 321} builds off of results like Proposition~\ref{prop:boolean characterization} and \cite[Theorem~2.1]{billey jockusch stanley} from the literature. The last of these shows that being $321$-avoiding is equivalent to having no consecutive substring $k(k+1)k$ in any reduced words. In Proposition~\ref{prop:boolean characterization}, having all distinct letters prevents both $321$- and $3412$-patterns. In the result of this paper, on the other hand, repeating the letter $k$ and having, without loss of generality, a $k+1$ between the repeated letters, forces a $321$-pattern in $\newperm{w}{k}$ centered at $k+1$. We can use this language to expand upon the result of Theorem~\ref{thm:not boolean means braid}, characterizing boolean intersection ideals $B(v) \cap B(w)$ in terms of patterns and giving a more complete answer to Question~\ref{ques:boolean order ideal}.

\begin{corollary}\label{cor:boolean intersection and centering}
Fix permutations $v, w \in \mf{S}_n$. The intersection $B(v) \cap B(w)$ is boolean if and only if, for all $k \in [1,n-2]$, at most one of the permutations $\newperm{v}{k}$ and $\newperm{w}{k}$ has a $321$-pattern centered at $k+1$.
\end{corollary}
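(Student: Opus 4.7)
The plan is to obtain this statement as an immediate consequence of Theorem~\ref{thm:not boolean means braid} together with Theorem~\ref{thm:interlaced 321}; the work is purely one of translation and careful handling of quantifiers, so no new combinatorial input is required.

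First I would negate the stated conclusion. The assertion ``for every $k \in [1,n-2]$, at most one of $\newperm{v}{k}$ and $\newperm{w}{k}$ has a $321$-pattern centered at $k+1$'' fails precisely when there exists some $k \in [1,n-2]$ such that \emph{both} $\newperm{v}{k}$ and $\newperm{w}{k}$ contain a $321$-pattern centered at $k+1$. Next, I would apply Theorem~\ref{thm:interlaced 321} separately to $v$ and to $w$ at this common value of $k$: each centered-pattern condition is equivalent to the corresponding permutation interlacing $k$ and $k+1$. Hence the negation becomes: there exists $k \in [1,n-2]$ such that $k$ and $k+1$ are interlaced in both $v$ and in $w$. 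Finally, Theorem~\ref{thm:not boolean means braid} recognizes this as exactly the obstruction to $B(v) \cap B(w)$ being boolean. Taking contrapositives yields the corollary.

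The only potential obstacle is keeping the logical structure clean: the same index $k$ must witness interlacing in both $v$ and $w$ to obstruct booleanness, and this same $k$ must be the one used on each side of each invocation of Theorem~\ref{thm:interlaced 321}. Since Theorem~\ref{thm:interlaced 321} is a biconditional for a single permutation at a single value of $k$, and Theorem~\ref{thm:not boolean means braid} pairs up interlacings in $v$ with interlacings in $w$ at the same $k$, the two equivalences chain together directly and no further argument is needed.
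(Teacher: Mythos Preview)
Your argument is correct and matches the paper's intended approach: the corollary is stated without a separate proof precisely because it follows immediately by combining Theorem~\ref{thm:not boolean means braid} with Theorem~\ref{thm:interlaced 321}, exactly as you describe. Your careful handling of the contrapositive and the shared index $k$ is appropriate and requires no additional input.
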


We demonstrate this characterization using boolean and non-boolean examples.

\begin{example}\label{ex:computing intersections}
Consider the principal order ideal of $w = 5137246$, intersected with the principal order ideals of two different permutations: $u = 5213674$ and $v = 3512674$.
\begin{enumerate}[label=(\alph*)]
\item To determine the structure of $B(u) \cap B(w)$, we compute the values shown in Table~\ref{table:u and w}. 
\begin{table}[htbp]
$$\begin{array}{c||c|c||c|c}
& \newperm{u}{k}, \text{ permuted} & \newperm{w}{k}, \text{ permuted} & 321\text{-pattern in } \newperm{u}{k} & 321\text{-pattern in } \newperm{w}{k}\\
\raisebox{0in}[.1in][.1in]{$k$}  & \text{letters in {\color{red} red}} & \text{letters in {\color{red} red}} & \text{centered at } k+1? & \text{centered at } k+1?\\
\hline
\hline
\raisebox{0in}[.2in][.1in]{$1$} & 5213674 & {\color{red} 1}{\color{red} 2}37{\color{red} 5}46 & \text{yes} & \text{no} \\
\hline
\raisebox{0in}[.2in][.1in]{$2$} & {\color{red} 1}2{\color{red} 3}{\color{red} 5}674 & 5137246 & \text{no} & \text{yes}\\
\hline
\raisebox{0in}[.2in][.1in]{$3$} & {\color{red} 3}21{\color{red} 4}67{\color{red} 5} & 513{\color{red} 4}2{\color{red} 7}6 & \text{no} & \text{yes}\\
\hline
\raisebox{0in}[.2in][.1in]{$4$} & {\color{red} 4}213{\color{red} 5}7{\color{red} 6} & {\color{red} 2}137{\color{red} 5}46 & \text{no} & \text{yes}\\
\hline
\raisebox{0in}[.2in][.1in]{$5$} & 5213{\color{red} 4}{\color{red} 6}{\color{red} 7} & 513{\color{red} 4}2{\color{red} 6}{\color{red} 7} & \text{no} & \text{no}
\end{array}$$
\caption{Data for $u = 5213674$ and $w = 5137246$.}\label{table:u and w}
\end{table}
Each row of the table has at least one ``no'' in its last two columns, so the intersection $B(u) \cap B(w)$ is boolean. We can confirm this by computing reduced words, such as $u = \rw{4321256}$ and $w = \rw{64323154}$, and using Theorem~\ref{thm:not boolean means braid}. In fact, $B(u) \cap B(w)$ is the union of two boolean principal order ideals, $B(\rw{43215}) \cup B(\rw{64321})$.
\item To determine the structure of $B(v) \cap B(w)$, we compute the values shown in Table~\ref{table:v and w}. 
\begin{table}[htbp]
$$\begin{array}{c||c|c||c|c}
& \newperm{v}{k}, \text{ permuted} & \newperm{w}{k}, \text{ permuted} & 321\text{-pattern in } \newperm{v}{k} & 321\text{-pattern in } \newperm{w}{k}\\
\raisebox{0in}[.1in][.1in]{$k$}  & \text{letters in {\color{red} red}} & \text{letters in {\color{red} red}} & \text{centered at } k+1? & \text{centered at } k+1?\\
\hline
\hline
\raisebox{0in}[.2in][.1in]{$1$} & 3{\color{red} 2}1{\color{red} 5}674 & {\color{red} 1}{\color{red} 2}37{\color{red} 5}46 & \text{yes} & \text{no} \\
\hline
\raisebox{0in}[.2in][.1in]{$2$} & {\color{red} 1}5{\color{red} 3}2674 & 5137246 & \text{yes} & \text{yes}\\
\hline
\raisebox{0in}[.2in][.1in]{$3$} & 3{\color{red} 2}1{\color{red} 4}67{\color{red} 5} & 513{\color{red} 4}2{\color{red} 7}6 & \text{no} & \text{yes}\\
\hline
\raisebox{0in}[.2in][.1in]{$4$} & 3{\color{red} 4}12{\color{red} 5}7{\color{red} 6} & {\color{red} 2}137{\color{red} 5}46 & \text{no} & \text{yes}\\
\hline
\raisebox{0in}[.2in][.1in]{$5$} & 3512{\color{red} 4}{\color{red} 6}{\color{red} 7} & 513{\color{red} 4}2{\color{red} 6}{\color{red} 7} & \text{no} & \text{no}
\end{array}$$
\caption{Data for $v = 3512674$ and $w = 5137246$.}\label{table:v and w}
\end{table}
The row for $k = 2$ has ``yes'' in both of the last two columns, meaning that $B(v) \cap B(w)$ is not boolean. Indeed, $1432567 = \rw{232} \in B(v) \cap B(w)$, and so the non-boolean poset $B(\rw{232})$ is a subset of this intersection ideal.
\end{enumerate}
\end{example}

Because $k+1$ is a fixed point of $\newperm{w}{k}$, we can use \cite[Lemma~2.8]{tenner rep-patt} to frame boolean intersection ideals in one more light, now in terms of ``support.''

\begin{definition}\label{defn:support}
The \emph{support} of a permutation $w \in \mf{S}_n$ is the set $\supp(w) \subseteq [1,n-1]$ consisting of all letters that appear in reduced words for $w$.
\end{definition}

We can use \cite[Lemma~2.8]{tenner rep-patt} and Corollary~\ref{cor:boolean intersection and centering} to characterize boolean intersection ideals by support. As a starting point, note that if $u$ fixes $k+1$, then the lemma implies that $k \in \supp(u)$ if and only if $k+1 \in \supp(u)$.

\begin{corollary}\label{cor:boolean intersection and support}
For permutations $v,w \in \mf{S}_n$, the following statements are equivalent:
\begin{itemize}
\item the intersection $B(v) \cap B(w)$ is boolean;
\item $k$ is in the support of at most one of $\newperm{v}{k}$ and $\newperm{w}{k}$, for all $k \in [1,n-2]$;
\item $k+1$ is in the support of at most one of $\newperm{v}{k}$ and $\newperm{w}{k}$, for all $k \in [1,n-2]$; and
\item $\{k,k+1\} \cap \supp(u) = \emptyset$ for at least one $u \in \{\newperm{v}{k},\newperm{w}{k}\}$, for all $k \in [1,n-2]$.
\end{itemize}
\end{corollary}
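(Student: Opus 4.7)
The plan is to reduce all four conditions to Corollary~\ref{cor:boolean intersection and centering} via a single structural observation about permutations with the relevant fixed point. Since each $\newperm{v}{k}$ and $\newperm{w}{k}$ is constructed precisely to fix the value $k+1$, what I need is a clean equivalence, for permutations $u$ fixing $k+1$, between (a) $u$ having a $321$-pattern centered at $k+1$, (b) $k \in \supp(u)$, and (c) $k+1 \in \supp(u)$.

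The first step is to prove this three-way equivalence. The equivalence of (b) and (c) is exactly the consequence of \cite[Lemma~2.8]{tenner rep-patt} already noted in the text. To get (a) $\Leftrightarrow$ (b), I would use the standard support characterization: $k \in \supp(u)$ if and only if $u([1,k]) \neq [1,k]$, equivalently if and only if there exist $i \le k < j$ with $u(i) > k \ge u(j)$. Now specialize to $u(k+1) = k+1$. The position $j = k+1$ cannot serve, because $u(k+1) = k+1 > k$; so $j \ge k+2$. Moreover $i \le k$ forces $u(i) \ne k+1$, and combined with $u(i) > k$ this gives $u(i) > k+1$. Hence the inversion certificate is precisely a $321$-pattern $u(i) > u(k+1) = k+1 > u(j)$ in positions $i < k+1 < j$, i.e.\ a $321$-pattern centered at $k+1$. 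The reverse direction reads off a certificate for $k \in \supp(u)$ directly from the $321$-pattern.

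The second step is to plug this equivalence into Corollary~\ref{cor:boolean intersection and centering}. Fixing $k \in [1,n-2]$, both $\newperm{v}{k}$ and $\newperm{w}{k}$ fix $k+1$, so for each of them the conditions (a), (b), (c) above coincide. Therefore the requirement that at most one of $\newperm{v}{k}, \newperm{w}{k}$ has a $321$-pattern centered at $k+1$ is identical to the requirement that at most one of them contains $k$ (resp. $k+1$) in its support. This gives the equivalence of the first three bullet points. For the fourth bullet, I would observe that the (b)$\Leftrightarrow$(c) equivalence makes $\{k,k+1\} \cap \supp(u) = \emptyset$ equivalent to the single condition $k \notin \supp(u)$, so "this holds for at least one $u \in \{\newperm{v}{k},\newperm{w}{k}\}$" is literally the condition in the second bullet.

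The only nontrivial step is the proof of (a) $\Leftrightarrow$ (b), and even there the main obstacle is just being careful in the case analysis — once the fixed-point hypothesis $u(k+1) = k+1$ is used to rule out $j = k+1$ and to upgrade $u(i) > k$ to $u(i) > k+1$, the $321$-pattern structure appears automatically. Everything else is a bookkeeping translation of Corollary~\ref{cor:boolean intersection and centering}.
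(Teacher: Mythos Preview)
Your proposal is correct and follows the same route as the paper: reduce everything to Corollary~\ref{cor:boolean intersection and centering} by showing, for a permutation $u$ fixing $k+1$, the equivalence of a $321$-pattern centered at $k+1$ with $k\in\supp(u)$ and with $k+1\in\supp(u)$, invoking \cite[Lemma~2.8]{tenner rep-patt} for the last equivalence. The paper leaves the details implicit, whereas you supply an explicit elementary argument for the (a)$\Leftrightarrow$(b) step via the characterization $k\in\supp(u)\iff u([1,k])\neq[1,k]$; this is exactly the kind of verification the paper's citation-only proof expects the reader to perform.
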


We conclude this note with another red herring. From Theorem~\ref{thm:not boolean means braid} and \cite[Lemma~5.10]{mazorchuk tenner}, one might hope that when an intersection $B(v) \cap B(w)$ is known to be boolean, perhaps that intersection is equal to $B(v') \cap B(w')$ for some permutations $v'$ and $w'$ that are themselves boolean. Sadly this is not always the case, as we can see with $v = \rw{123} = 2341$ and $w = \rw{2132} = 3412$. The intersection of their principal order ideals is equal to $B(v) \setminus \{v\}$, and yet any boolean $w'$ whose principal order ideal contains both $\rw{12}$ and $\rw{23}$ will necessarily also contain $\rw{123} = v$, in which case the intersection ideal would be all of $B(v)$. This example is depicted in Figure~\ref{fig:cannot reduce to boolean}.

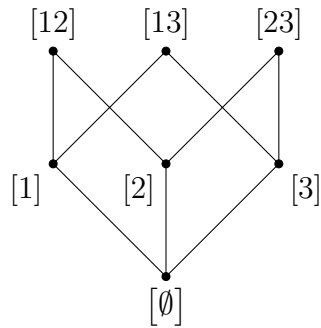
\begin{figure}[htbp]
\begin{tikzpicture}[scale=.75]
\filldraw (0,0) circle (2pt) coordinate (empty) node[below] {$\rw{\emptyset}$};
\filldraw (-2,2) circle (2pt) coordinate (1) node[below left] {$\rw{1}$};
\filldraw (0,2) circle (2pt) coordinate (2) node[below left] {$\rw{2}$};
\filldraw (2,2) circle (2pt) coordinate (3) node[below right] {$\rw{3}$};
\filldraw (-2,4) circle (2pt) coordinate (12) node[above] {$\rw{12}$};
\filldraw (0,4) circle (2pt) coordinate (13) node[above] {$\rw{13}$};
\filldraw (2,4) circle (2pt) coordinate (23) node[above] {$\rw{23}$};
\foreach \x in {1,2,3} {\draw (empty) -- (\x);}
\draw (1) -- (12) -- (2) -- (23) -- (3) -- (13) -- (1);
\end{tikzpicture}
\caption{The boolean intersection ideal $B(\rw{123}) \cap B(\rw{2132})$.}\label{fig:cannot reduce to boolean}
\end{figure}

\end{document}